\newtheorem{theorem}{Theorem}
\newtheorem{lemma}[theorem]{Lemma}
\newtheorem{case}{Case}
\newcommand{\showpic}[5]{\begin{figure}[#1]\centering\includegraphics[width=#2cm]{#3}\caption{#4}\label{#5}\end{figure}}
\title{Mathematics of a Sudo-Kurve}
\author{Tanya Khovanova \and Wayne Zhao}
\date{}
\begin{document}

\maketitle

\begin{abstract}
    We investigate a type of a Sudoku variant called Sudo-Kurve, which allows bent rows and columns, and develop a new, yet equivalent, variant we call a Sudo-Cube. We examine the total number of distinct solution grids for this type with or without symmetry. We study other mathematical aspects of this puzzle along with the minimum number of clues needed and the number of ways to place individual symbols.
\end{abstract}

\section{Introduction} 

Sudoku, a massively popular puzzle, was likely invented in 1979 by Howard Garnes. It hopped over to Japan, which gave it its modern name of ``Sudoku.'' After Wayne Gould got the first Sudoku published in Britain in 2004, the puzzle grew dramatically in popularity in 2005. As a result, Sudoku has been extensively studied \cite{RT}.

Sudoku puzzles are special cases of Latin squares, which were studied by Euler two centuries before. In 2005, extensive casework determined the total number of Sudoku solution grids to be $6670903752021072936960$ not accounting for symmetries \cite{totalnumber}; enough Sudoku puzzles to use up the informational, energy, and storage capacities of all of human history. Accounting for symmetries there are $5472730538$ solution grids. In 2013, the minimum number of clues needed to solve a given Sudoku was found. An extensive computer-assisted proof found the answer to be $17$ \cite{minnumber}.

``Regular'' Sudoku has thus been well-studied. There are now several dozen variants of Sudoku, appearing in many books, world puzzle championships, and websites \cite{mutantsudoku,beyondsudoku,sudokumasterpieces}. One of the websites at the forefront of this is the GMPuzzles blog \cite{gmpuzzles}. One particularly interesting variant found on GMPuzzles and elsewhere is that of the Sudo-Kurve, first invented by Steve Schaefer and named by Adam R. Wood \cite{gmpuzzles}. In Sudo-Kurve, each gray line connects the $9$ cells that comprise a row or column. These rows and columns are twisted into each other, but the rule about each row or column containing one of each number from $1$ to $9$ still holds.

In this paper we examine various mathematical aspects of a particular type of Sudo-Kurve we call a Cube Sudo-Kurve. We begin by examining the rules for Cube Sudo-Kurves and go slowly over an example of a puzzle in Section~\ref{section2}. Then, we find an equivalence between the Cube Sudo-Kurve and a Sudoku on a $3 \times 3 \times 3$ cube, and use this equivalence to deduce some strategies for solving Cube Sudo-Kurves in Section~\ref{section3}. We use these strategies in Section~\ref{section4} to determine that the total number of distinct solution grids without taking symmetry into account is 14515200. In Section~\ref{section5} we give an alternative calculation of the same number by using some symmetries. Later, in Section~\ref{section6}, we compute the total number of distinct solution grids accounting for all possible symmetries to be 2. In Section~\ref{section7} we consider Sudo-Cubes of other sizes. Then, in Section~\ref{section8}, we show that the minimum number of clues required to uniquely determine a solution of a Cube Sudo-Kurve is 8. Finally, we conclude with some observations on individual digit placement in Section~\ref{section9}.

\section{Cube Sudo-Kurve}
\label{section2}

In the paper, we consider one of the most interesting Sudo-Kurves that contain only three 3-by-3 squares. We call it a \textit{Cube Sudo-Kurve}.

The Cube Sudo-Kurve consists of three square blocks as in Figure~\ref{fig:emptycube}. The gray bent lines indicate how rows and columns continue. For example, the first row of the top left block becomes the last column of the middle block and continues to the first row of the bottom right block.

\begin{figure}[htb!]
\centering
\includegraphics[width=8cm]{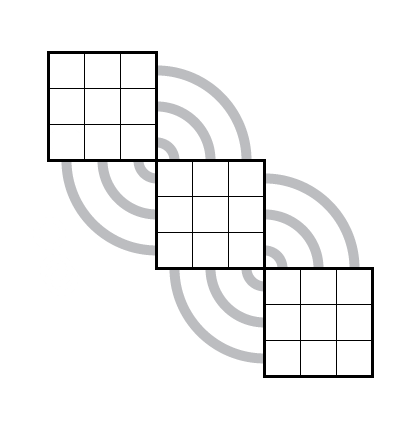}
\caption{Empty Cube Sudo-Kurve grid}\label{fig:emptycube}
\end{figure}

Here we provide a sample Cube Sudo-Kurve puzzle (Figure~\ref{samplepuzzle}) and solve it. This puzzle appeared on GMPuzzles on February 12, 2013 \cite{gmpuzzles}.

\showpic{htbp!}{6}{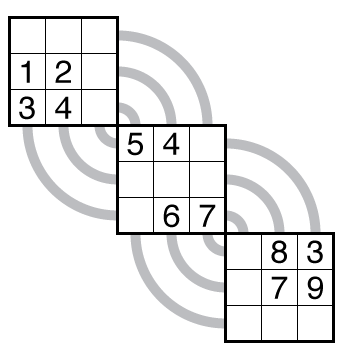}{A sample Cube Sudo-Kurve puzzle from \href{https://www.gmpuzzles.com/blog/2013/02/dr-sudoku-prescribes-38-sudo-kurve/}{the GMPuzzles blog}}{samplepuzzle}

Our first step is to notice that there are two instances of the symbol 3 and two of the symbol 7. Let us first consider the 3. The 3 in the block on the upper-left prevents the left column of the center block from containing a 3. Similarly, the 3 in the lower-right block prevents the right column of the center block from containing a 3. We therefore know the 3 must go in the center of the center block.

More generally, we note intuitively that given any two occurrences of a symbol we can determine the position of the third. This will be proven later. For now, this also means we can fill in the third occurrence of the 7, see Figure~\ref{3and7}.

\showpic{htbp!}{6}{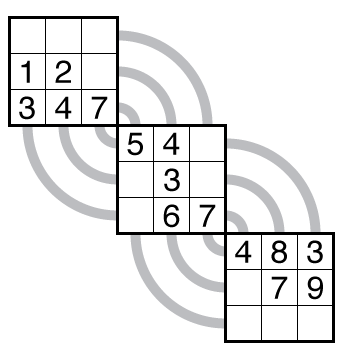}{Determining positions of 3 and 7}{3and7}

Now consider the middle row of the upper-left block, which is also the middle column of the center block and the middle row of the lower-right block. This row is missing the digits 8 and 5. However, we know from the 5 in the center block that the 5 cannot go into the upper-left block, so the 5 must belong in the lower-right block. This also fixes the value of 8 in the upper-left block, as in Figure~\ref{rowfill}.

\showpic{htbp!}{6}{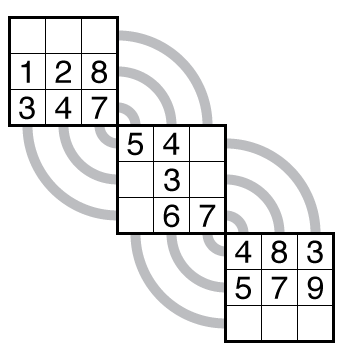}{Filling in a row}{rowfill}

Now we have two occurrences each of 5 and 8 so we can fill in their third occurrences (see Figure~\ref{5and8}).

\showpic{htbp!}{6}{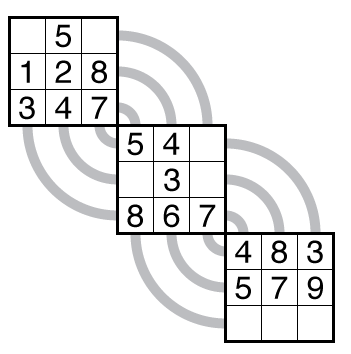}{Determining positions of 5 and 8}{5and8}

The column that starts from the left column of the upper-left block is missing a 2 and a 9. As there is a 9 in the lower-right block, the 9 in that column cannot be in that block, so it must be in the upper-left. We now can place the missing 2, as in Figure~\ref{columnfill}.

\showpic{htbp!}{6}{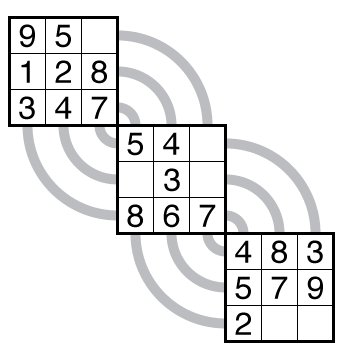}{Filling in a column}{columnfill}

Now we can fill in the remaining 2 and 9. Additionally, there is one digit missing in the upper-left block---a 6. We fill these in Figure~\ref{blockfill}.

\showpic{htbp!}{6}{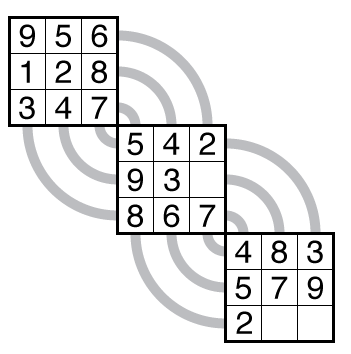}{Filling in a block}{blockfill}

We can now easily fill in the rest of the Cube Sudo-Kurve, see Figure~\ref{done!}.

\showpic{htbp!}{6}{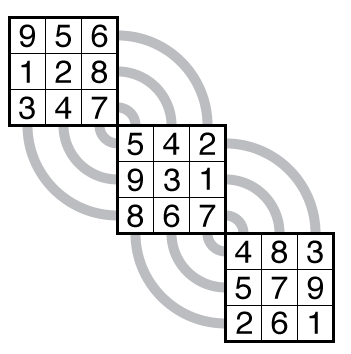}{Complete Cube Sudo-Kurve}{done!}

\section{Sudo-Cube}
\label{section3}

We now introduce another variation of Sudoku, which we later prove to be isomorphic to Cube Sudo-Kurve. We call it a \textit{Sudo-Cube}. This interpretation will make it easier to compute the total number of solution grids, with or without symmetry. The grid is a $3\times 3 \times 3$ cube. The digits $1$ though $9$ are placed in the cells of the cube so that the nine digits in each layer perpendicular to one of the axis are all distinct. To represent the Sudo-Cube in this paper we arrange  horizontal layers of the cube on a plane next to each other as in Figure~\ref{fig:emptysudocube}. We can assume that the bottom layer is on the left and the top layer on the right.

We denote the left block which represents the bottom layer of the cube as $B1$, the center block representing the middle layer of the cube as $B2$, and the right block representing the top layer of the cube as $B3$.

\begin{figure}[htb!]
\centering
\includegraphics[width=8cm]{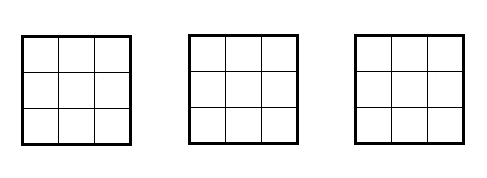}
\caption{Empty Sudo-Cube grid}\label{fig:emptysudocube}
\end{figure}

Now we prove that these two grids are equivalent.

\begin{theorem}
A filled Cube Sudo-Kurve is isomorphic to a filled Sudo-Cube.
\end{theorem}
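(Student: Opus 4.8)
The plan is to build an explicit bijection $\phi$ from the $27$ cells of the Cube Sudo-Kurve to the $27$ cells $(x,y,z)\in\{1,2,3\}^3$ of the Sudo-Cube and then check that $\phi$ carries the nine ``line'' constraints of the former onto the nine axis-layer constraints of the latter. First I would fix notation: identify a cell of the Cube Sudo-Kurve by naming its block ($B1$, $B2$, or $B3$) together with its row and column index $1,2,3$ within that block, and read off from Figure~\ref{fig:emptycube} how the six gray lines are assembled. Each bent ``row'' of the puzzle is the union of a row of $B1$, a column of $B2$, and a row of $B3$, while each bent ``column'' is the union of a column of $B1$, a row of $B2$, and a column of $B3$; the precise matching of indices along a line is given by a few permutations of $\{1,2,3\}$ coming from the picture (reversals where a line turns a corner). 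Together with the three block constraints this is exactly nine $9$-cell constraints, matching the nine axis-layers $\{x=c\}$, $\{y=c\}$, $\{z=c\}$ of the Sudo-Cube.

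Next I would pin down $\phi$. Send block $Bk$ to the layer $\{z=k\}$ (this is just how the Sudo-Cube picture was drawn), let $\phi$ be the identity $(\text{row},\text{col})\mapsto(x,y)$ on $B1$, and then force the rest by demanding that each bent row map to a layer $\{x=c\}$ and each bent column map to a layer $\{y=c\}$. On $B2$ this forces $\phi$ to be a transpose composed with permutations of the two coordinates --- the transpose being unavoidable because the Sudo-Kurve bends a row of $B1$ into a column of $B2$ --- and on $B3$ it forces $\phi$ to be a product of a row-permutation and a column-permutation. The substantive step is then the verification that this single map $\phi$ simultaneously straightens all three families: tracing a bent row through its three pieces and applying the corner-turning permutations shows its image lies in one layer $\{x=c\}$, hence equals it by cardinality, and likewise for bent columns and $\{y=c\}$, and trivially for blocks and $\{z=c\}$. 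One also checks that $\phi$ restricted to $B2$ is a genuine bijection of $B2$ onto $B2$, which holds since it is assembled from permutations of $\{1,2,3\}$.

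Finally I would conclude. A filling $F$ of the Cube Sudo-Kurve is a valid solution precisely when each of its nine constraint sets receives each symbol $1,\dots,9$ once, and a filling of the Sudo-Cube is valid precisely when each of its nine axis-layers does; since $\phi$ matches the two collections of constraint sets, $F$ is a valid Cube Sudo-Kurve if and only if $F\circ\phi^{-1}$ is a valid Sudo-Cube. As $\phi$ is a bijection of cells, this is a bijection between solution grids; it also carries partial fillings to partial fillings, so it identifies puzzles with puzzles having corresponding solutions, which is the claimed isomorphism. I expect the only real difficulty to be the bookkeeping in the middle step --- keeping several corner-turning permutations consistent so that one map works for all three line-families at once --- but conceptually the whole argument rests on the single observation that the center block must be transposed.
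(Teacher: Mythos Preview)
Your proposal is correct and follows essentially the same approach as the paper: both construct a cell-to-cell bijection sending the three blocks to the three $z$-layers and then verify that bent rows and bent columns are carried to $x$- and $y$-layers, with the key observation being that the middle block must be reflected. The only difference is presentational---the paper names the explicit map (identity on the two outer blocks, anti-diagonal flip on the middle block) while you leave the coordinate permutations abstract, to be read off from the figure.
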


\begin{proof}
We match the top left block of the Cube Sudo-Kurve to $B1$---the bottom layer of the Cube Sudo-Cube. 
We match the bottom right square of the Cube Sudo-Kurve, to the top layer of the Sudo-Cube; that is, to $B3$. We flip the middle square of the Cube Sudo-Kurve with respect to the anti-diagonal and match the result to the middle layer of the Sudo-Cube, aka $B2$.

We can see that the bent row corresponding to the first row of the top left block of the Cube Sudo-Kurve, together with the last column of the middle block and the first row of the bottom right block of the Cube Sudo-Kurve, becomes the first row of each of the blocks of the Sudo-Cube. In other words, the rows of the Cube Sudo-Kurve correspond to front-facing squares in the Sudo-Cube. Similarly, the columns of the Cube Sudo-Kurve correspond to the side-facing squares.
\end{proof}

Note that the bent rows and columns now correspond to actual rows and corresponding columns of all blocks in a Sudo-Cube. 

We now denote each square in the Sudo-Cube with a coordinate. The square in the $k$th row in the $m$th column in the $n$th block shall be denoted with the triple $(k,m,n)$.

Visualizing this Sudo-Kurve as a cube helps us prove other results. 

\begin{lemma}
Knowing two instances of a symbol uniquely defines the location of the third instance of the symbol.
\end{lemma}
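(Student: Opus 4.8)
The plan is to argue entirely in the Sudo-Cube picture, invoking the isomorphism of the preceding theorem, since there the three coordinate directions make the bookkeeping transparent. First I would record the elementary census. The cube has $27$ cells and the alphabet has $9$ symbols; each layer perpendicular to an axis is a $3\times 3$ slice of $9$ cells whose entries are the nine distinct digits $1,\dots,9$, so in fact each such layer contains every symbol exactly once. Fixing a symbol $d$ and looking at the three layers perpendicular to the block axis $n$, we get that $d$ occurs exactly once in each of $B1$, $B2$, $B3$; repeating the argument with the three layers perpendicular to the row axis and then the three perpendicular to the column axis shows that the three occurrences of $d$ have pairwise distinct block indices, pairwise distinct row indices, and pairwise distinct column indices.

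Next I would package this as follows: writing the three occurrences of $d$ as $(k_1,m_1,n_1)$, $(k_2,m_2,n_2)$, $(k_3,m_3,n_3)$, each of the triples $(n_1,n_2,n_3)$, $(k_1,k_2,k_3)$, $(m_1,m_2,m_3)$ is a permutation of $(1,2,3)$. Hence if two of the three occurrences are given, the block index of the remaining one is the unique element of $\{1,2,3\}$ not already used, and likewise for its row and its column index; so the third occurrence is completely forced. Pulling this back through the isomorphism gives the statement for a Cube Sudo-Kurve as well, and in particular justifies the informal "fill in the third occurrence" moves used in the worked example of Section~\ref{section2}.

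I do not anticipate a real obstacle; the only points requiring care are (i) upgrading "each symbol appears at most once in a layer" to "exactly once" via the pigeonhole/bijection argument on nine symbols in nine cells, and (ii) confirming that the nine axis-layers are precisely the constraint sets of the Sudo-Cube, so that no symbol can repeat a block, row, or column index. In a filled valid grid the two given occurrences are automatically mutually compatible (distinct blocks, rows, and columns), so after these observations the conclusion is an immediate counting step.
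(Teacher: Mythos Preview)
Your proof is correct and follows essentially the same coordinate-based approach as the paper: both argue that since the three occurrences of a symbol must have pairwise distinct values in each of the three coordinates, knowing two of them pins down the third as the unique remaining value in $\{1,2,3\}$. You are simply more explicit than the paper in justifying \emph{why} the coordinates must be pairwise distinct (via the ``exactly once per layer'' pigeonhole step), whereas the paper takes this for granted.
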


\begin{proof}
Indeed, assume two instances of a symbol are found at $(k_1, m_1, n_1)$ and $(k_2, m_2, n_2)$. We try to find all triples $(k_3, m_3, n_3)$ where the third instance of the symbol can be located.
Now since, there are only three values of the first coordinate possible, knowing $k_1$ and $k_2$ leaves us with $k_3$. Similarly, we can determine $m_3$ and $n_3$, so there is only one possible position for the third instance of the symbol to be found.
\end{proof}

\section{Number of solution grids}
\label{section4}

We start this section by discussing the situations where we cannot finish placing a particular symbol. Suppose we fill blocks $B1$ and $B2$ with digits. Then each digit has a unique place in $B3$ it has to go to. But it could happen that two different digits need to be in the same cell in $B3$. We call this situation an \textit{obstruction}. An obstruction only happens if the two digits use the same two rows and the same two columns in $B1$ and $B2$. 

We have two cases here. The first case is that the digits $a$ and $b$ swap places when moving from $B1$ to $B2$. We call this a \textit{swap}. The second case is when the digits $a$ and $b$ form opposite corners of a rectangle after projection. We call this a \textit{cross}.

We can now count the number of ways to complete this Cube Sudo-Kurve grid. 

\begin{theorem}
The number of ways to fill the Cube Sudo-Kurve grid is $14515200$.
\end{theorem}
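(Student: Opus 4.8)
The goal is to count the number of filled Sudo-Cubes, which by the isomorphism theorem equals the number of filled Cube Sudo-Kurves. The plan is to build the cube layer by layer: first count the ways to fill $B1$, then $B2$ relative to $B1$, then argue how many valid completions to $B3$ exist. Filling $B1$ is unconstrained — it is just an arbitrary bijection between the nine cells of a $3\times 3$ block and the digits $1$ through $9$, giving $9!$ possibilities. For $B2$, the constraint is that within each of the three rows and each of the three columns, the digit in $B2$ must differ from the corresponding digit in $B1$; more precisely, for each line (row or column) perpendicular to the vertical axis the three cells across $B1, B2, B3$ must be distinct, which forces the $B2$ entry to avoid the $B1$ entry in its row-triple and its column-triple. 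The count of valid $B2$ configurations given $B1$ is a fixed number independent of $B1$ (by symmetry/relabeling), so I would compute it once.

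**The middle-layer count and obstructions.** The number of ways to fill $B2$ given $B1$ is the number of $3\times 3$ arrays of digits such that no row of $B2$ reuses a digit from the same row of $B1$ and no column of $B2$ reuses a digit from the same column of $B1$, together with the Latin-like condition coming from the layer perpendicular to the other axes. This is where the bulk of the casework lives: I expect to set up the count as a permanent/inclusion–exclusion computation, or to exploit the already-introduced \emph{swap} and \emph{cross} dichotomy. The key subtlety flagged in the text is the \emph{obstruction}: after $B1$ and $B2$ are chosen, each digit has a forced target cell in $B3$, and the filling of $B3$ succeeds if and only if these nine targets are all distinct. So the real statement I need is: among all $(B1, B2)$ pairs satisfying the row/column non-repetition constraints, count exactly those with no obstruction — equivalently, those for which the induced map "digit $\mapsto$ its forced $B3$-cell" is a bijection. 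When there is no obstruction, $B3$ is uniquely determined, contributing a factor of exactly $1$.

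**Assembling the count.** Thus the answer has the shape $9! \cdot N$, where $N$ is the number of obstruction-free $B2$-fillings for a fixed $B1$. Since $9! = 362880$ and the target is $14515200$, this predicts $N = 40$, and I would verify $N = 40$ directly. The cleanest route: fix $B1$ to be the "natural" array (say row $i$ contains $3i-2, 3i-1, 3i$ left to right), enumerate the $B2$ arrays meeting the constraints, and discard those creating a swap-type or cross-type obstruction. I would organize the enumeration by how the rows of $B2$ are chosen as derangement-like selections relative to $B1$ along rows, then impose the column constraints, then impose obstruction-freeness, using the \verb|case| environment to split on the structure (e.g., how many digits stay in their $B1$-column). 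The main obstacle is making this enumeration airtight rather than merely plausible: there are many near-miss configurations, and it is easy to double-count or to forget that a configuration legal for rows and columns can still be killed by the $B3$-obstruction. I would therefore cross-check the final $N = 40$ against the Section~\ref{section5} symmetry argument, and against the isomorphism by spot-checking that a handful of explicit completed grids from Section~\ref{section2} are counted exactly once.
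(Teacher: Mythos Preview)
Your overall architecture is exactly the paper's: fix $B1$ (contributing $9!$), count the obstruction-free fillings of $B2$ (call this $N$), observe that $B3$ is then uniquely forced, and conclude $9!\cdot N$. So at the level of strategy there is no daylight between you and the paper.

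The gap is that you have not actually proved $N=40$; you have reverse-engineered it from the target $14515200/9!$ and promised to ``verify $N=40$ directly.'' But that verification \emph{is} the proof --- everything else is bookkeeping. The paper carries it out by classifying the first row of $B2$ into three structural cases (all three entries from a single row of $B1$; three entries from distinct rows and distinct columns of $B1$; exactly two entries sharing a row and two sharing a column of $B1$), and then solving each case to completion, obtaining $16+12+12=40$. Your proposed organizing principle (``how many digits stay in their $B1$-column'') is a different and vaguer case split, and you do not show that it partitions the possibilities cleanly or that the obstruction condition can be checked case by case. Until that enumeration is actually executed, the argument is circular: the only evidence offered for $N=40$ is the theorem statement itself.

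A smaller point: your description of the constraint on $B2$ wobbles between a line condition (``the three cells across $B1,B2,B3$ must be distinct'') and the correct layer condition (each row of $B2$ is disjoint, as a set, from the same row of $B1$, and likewise for columns, with $B2$ itself a bijection). You land on the right formulation eventually, but the phrase ``for each line \dots\ the three cells across $B1,B2,B3$'' is not the Sudo-Cube rule and would, if taken literally, lead to the wrong count.
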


\begin{proof}
Note that the number of ways to fill in $B2$ and $B3$ does not depend on which of the $9!$ ways we fill in $B1$. We therefore assume the grid is filled in like in Figure~\ref{fig:standardstart}.
\showpic{ht}{10}{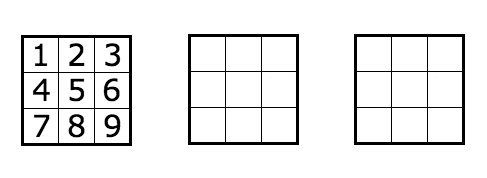}{Standard Starting Position}{fig:standardstart}
Later will multiply by $9!$ to get the total number of solutions to this Sudo-Kurve.

We now have $3$ cases to contend with, as described below.

\begin{case}
The first row of $B2$ is comprised entirely of the elements of either the second or the third row of $B1$.
\end{case}

Let us assume the first row of $B2$ is comprised entirely of the elements of the second row of $B1$, that is $\{4,5,6\}$ in some order. This forces the second row of $B2$ to be $\{7,8,9\}$ in some order, and forces the third row of $B2$ to be $\{1,2,3\}$ in some order.

We see that the $4$ can really only go in $2$ spots: the second or third box in the first row of $B2$. Similarly, the $5$ can only go in $2$ spots, as can the $6$, and it is easy to see that overall, this gives $2$ ways to order the first row (this is equivalent to the fact that there are $2$ derangements of $3$ objects), which also locks in the positions of the $4$, 5, and 6 in $B3$. Similarly, there are $2$ ways to arrange the elements of the second row of $B2$ and $2$ ways to arrange the third, which gives a total of $2^3 = 8$. Similarly, there are 8 ways to fill the grid when the first row of $B2$ is $\{7,8,9\}$ making the total of 16.

\begin{case}
The first row of $B2$ contains three elements from different columns and not all from the same row.
\end{case}

Assume that the first row of $B2$ contains $4$ and $5$. Then the third element of the first row of $B2$ would need to be among $\{7,8,9\}$. But since $7$ is in the same column as $4$, and $8$ is in the same column as $5$, the third element of the first row of $B2$ is $9$. All other ways of choosing digits for the first row of $B2$ are equivalent to this one as we can reshuffle rows and columns. 

It can be seen that there are $2 \cdot \binom{3}{2} = 6$ ways to choose the elements for the top row. Namely, there are $2$ ways to choose which row we will have two elements from and $\binom{3}{2}$ ways to pick the two elements from that row, which determines the third number.

We now assume the top row is $\{4,5,9\}$ in some order. We use braces to indicate that the order is not known yet. We can then deduce the following about what elements are in what rows as seen in Figure~\ref{fig:deductions}.

\showpic{ht}{10}{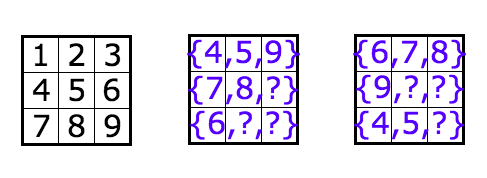}{Deductions}{fig:deductions}

We know that $7$ and $8$ must be in the second and $6$ in the third row of $B2$. 

As it turns out, once we have the information about what elements are in the first row, there are only two ways to finish the puzzle.

To prove this, we simply solve the puzzle. Assume the first row is $5,9,4$ in that order. We can then immediately fill in more cells as in  Figure~\ref{fig:s2.1}.

\showpic{ht}{10}{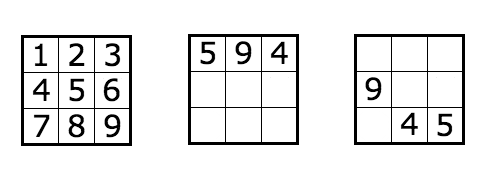}{Subcase 1 of Case 2}{fig:s2.1}

But in fact, we can uniquely determine the rest of the grid. Note that in $B2$, the bottom-right square cannot be filled with a $6$, $7$, or $8$. It therefore contains instead a $1$, $2$, or $3$. It cannot be 3, as we already have a $3$ in the right column. It cannot be a 2 as then $2$ is swapped with $9$. It has to be $1$. After that, $2$ has to go in the first column. It cannot be in the middle row, as then 2 needs to be in the bottom right corner of $B3$ which is occupied. That means 2 has to be in the bottom left corner. Therefore, number 3 is in the middle row in $B2$. We know $3$ cannot be in the first column as it clashes (forms a swap) with $4$. Therefore, the only way is as shown in Figure~\ref{fig:s2.1deductions}.

\showpic{ht}{10}{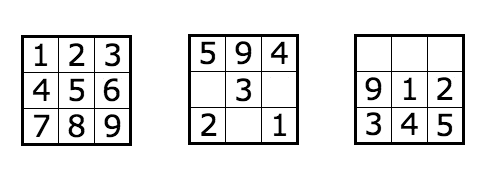}{Subcase 1 of Case 2 Deductions}{fig:s2.1deductions}

After that the rest is uniquely defined.

Thus, for each of the $6 \cdot 2 = 12$ ways to choose and fix the first row, we only get one solution, which implies there are actually $12$ total solutions in this case.

\begin{case}
The first row of $B2$ contains exactly two elements of the same row of $B1$. Similarly, there are exactly two elements from the same column.
\end{case}

This case is similar to Case $2$, except we have $\{4,5,7\}$ in the first row, for example, instead of $\{4,5,9\}$. Note that $4$ and $5$ are elements of the second row of $B1$, while $7$ is an element of the third row of $B1$, and happens to also be in the same column as $4$ in $B1$. In total, there are $2 \cdot \binom{3}{2} \cdot 2 = 12$ ways to choose the elements in the first row of $B2$. If we assume the first row is $\{4,5,7\}$, we can determine the following information about what rows contain what numbers as in Figure~\ref{fig:c3}.

\showpic{ht}{10}{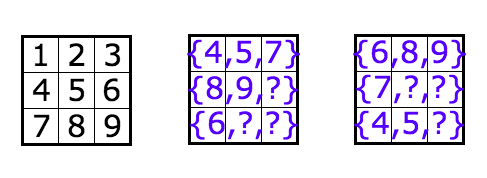}{Case 3}{fig:c3}

However, we can actually determine more information. It turns out the order of the numbers in the first row is fixed. Note that $4$ and $7$ must go in the last two cells of the first row of $B1$. Integer $5$ must therefore be in the first cell of the first row of $B1$, and $5$ must be in the bottom-right-most box of $B3$. But now notice $4$ cannot be the second element in the first row of $B2$, or else the $4$ must occupy the same position as the $5$ in $B3$. The position of $4$, and subsequently of the $7$, is therefore fixed.

There is now enough information in this problem to completely solve this puzzle now, see Figure~\ref{fig:c3solved}.

\showpic{ht}{10}{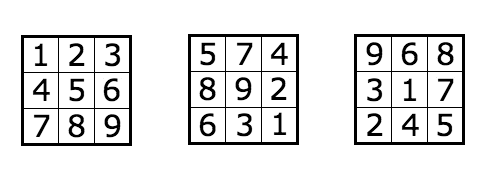}{Case 3 Solved}{fig:c3solved}

All other ways of choosing digits for the first row of $B2$ are equivalent to this one as we can reshuffle rows and columns. Since there is only one solution for each initial arrangement, there are 12 solution grids in this case.

Therefore the total number of solution grids with $B1$ fixed is $16+12+12 = 40$, implying there are $40 \cdot 9! = 14515200$ total solution grids.
\end{proof}

\section{Alternative count}
\label{section5}

We can also exploit symmetries to determine the total number of solution grids. The symmetries of the Sudo-Cube are similar to those of regular Sudoku. They are, swapping two parallel layers of the Sudo-Cube, rotating or reflecting the entire cube, and relabeling the digits of the cube. Note that swapping layers manifests in the display of the Sudo-Cube as either like swapping blocks, rows in the blocks, or columns in the blocks.

Now assume, without loss of generality, that we fix the number $5$ in the center of $B1$. Then the number $5$ in a solution to this Cube Sudo-Kurve must be in row $1$ or row $3$ and column $1$ or column $3$ of $B2$. We can switch these columns and rows arbitrarily, until the $5$ is in the top left corner of $B2$. We then relabel the rest of the cells so that $B1$ is in standard configuration. 

Thus, after counting the number of solutions after fixing $B1$ and the top left corner of $B2$ as 5, we can multiply it by $4 \cdot 9!$. There happen to be $10$ ways, and we get the same number of solution grids as before.

\subsection{Explicit Cases}

Let us call each of these 10 ways \textit{sudo-cases}. We can list out all $10$ sudo-cases with the $5$ in the upper-left of the middle square. The first four correspond to Case 1, when the first row of $B2$ is comprised entirely of the elements of either the second or the third row of $B1$. Together with the condition that 5 is in the upper left corner of $B2$ we get that the second row of $B2$ must be 5, 6, and 4. Here are sudo-cases 1 through 4.

\begin{figure}[H]
    \centering\includegraphics[width=7cm,trim={0 1cm 0 0},clip]{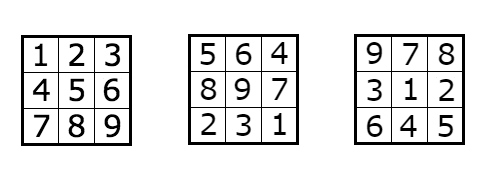}
    \centering\includegraphics[width=7cm,trim={0 1cm 0 0},clip]{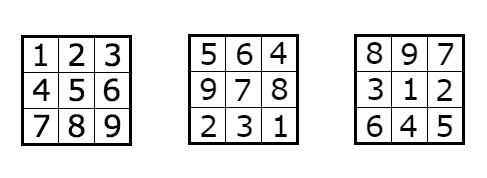}
    \centering\includegraphics[width=7cm,trim={0 1cm 0 0},clip]{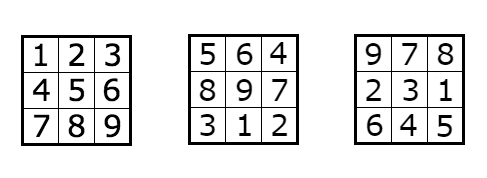}
    \centering\includegraphics[width=7cm,trim={0 1cm 0 0},clip]{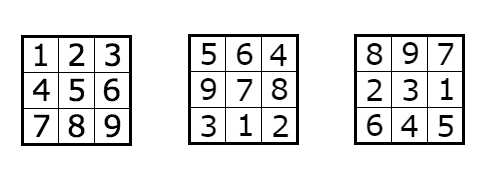}
\end{figure}

Now we consider the second case when the first row of $B2$ contains three elements from different columns, not all from the same row. There are three possibilities for the first row of $B2$. Sudo-cases 5, 6, and 7 are presented below.

\begin{figure}[H]
    \centering\includegraphics[width=7cm,trim={0 1cm 0 0},clip]{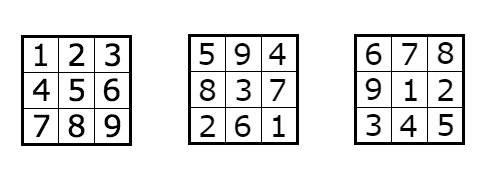}
    \centering\includegraphics[width=7cm,trim={0 1cm 0 0},clip]{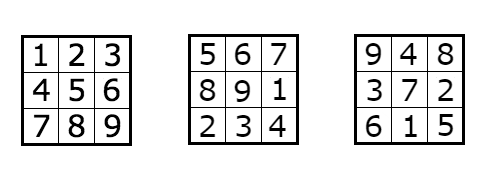}
    \centering\includegraphics[width=7cm,trim={0 1cm 0 0},clip]{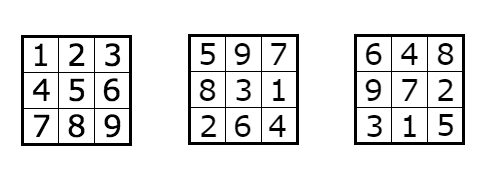}
\end{figure}

In the Case 3, the first row of $B2$ contains exactly two elements of the same row of $B1$ and exactly two elements from the same column. There are three sudo-cases 8, 9, and 10 below.

\begin{figure}[H]
    \centering\includegraphics[width=7cm,trim={0 1cm 0 0},clip]{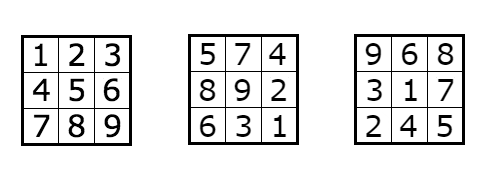}
    \centering\includegraphics[width=7cm,trim={0 1cm 0 0},clip]{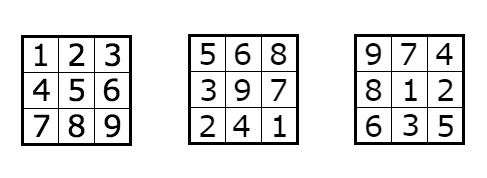}
    \centering\includegraphics[width=7cm,trim={0 1cm 0 0},clip]{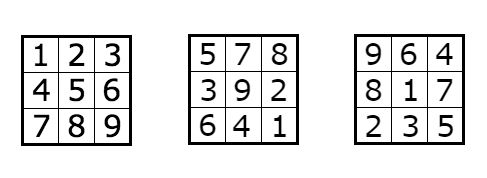}
\end{figure}

\section{The number of different solution grids up to symmetries}
\label{section6}

We want to calculate the number of different solution grids up to symmetries. The symmetries are: 
\begin{itemize}
    \item Relabeling of the digits.
    \item Swapping different layers of the cube. In other words, swapping blocks, rows, and columns.
    \item Movements of the cube: rotations and reflections.
\end{itemize}

We already have 10 sudo-cases. But we did not account for all possible symmetries. We only accounted for relabeling and two types of swaps: swapping top and bottom rows and swapping the first and the last column. The last two swaps are equivalent to reflections of the cube.

We did not account for permuting blocks $B1$, $B2$, and $B3$. We also did not yet account for swapping the left and right layers with the middle layer. We did not yet account for swapping the front and back layers with the middle layer. We also did not try every move of the cube.

Many of these ten sudo-cases are really equivalent. We start by showing that sudo-case 1 is not equivalent to any other sudo-case.

\begin{lemma}
Sudo-case 1 will remain sudo-case 1 under all transformations.
\end{lemma}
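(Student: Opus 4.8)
The plan is to characterize sudo-case~1 intrinsically as the unique \emph{affine} Sudo-Cube among the ten, and then to observe that affineness is preserved by every symmetry; so normalizing any transform of sudo-case~1 must return sudo-case~1. To set up, write each of $k,m,n$ as an element of $\mathbf{Z}_3=\{0,1,2\}$ (so that $B1$ is the layer $n=0$) and fix a bijection $\iota$ from the nine digits to $\mathbf{Z}_3^2$. Call a filled Sudo-Cube \emph{affine} if for \emph{some} such $\iota$ the map $(k,m,n)\mapsto\iota(\text{digit at }(k,m,n))$ is an affine map $\mathbf{Z}_3^3\to\mathbf{Z}_3^2$; equivalently (since a grid is automatically a function) if its graph is a coset of a subgroup of $\mathbf{Z}_3^5$. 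Taking for $\iota$ the bijection read off from a standard $B1$, one checks directly that sudo-case~1 is exactly the grid with $\iota(\text{digit at }(k,m,n))=(k,m)+n\,(1,1)$: its $B1$ is standard, each $B_{n+1}$ is $B_n$ cyclically shifted, its first row of $B2$ is $5,6,4$, and $5$ sits in the top-left cell of $B2$. So sudo-case~1 is affine.

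Next I would show that affineness is symmetry-invariant. Relabeling the digits by a permutation $\pi$ turns an $\iota$-affine grid into an $(\iota\circ\pi^{-1})$-affine one, so relabelings are harmless. Each positional symmetry --- permuting the three axes, permuting the three layers within one axis, or any rigid motion of the cube --- acts on $\mathbf{Z}_3^3$ by an affine map, because every permutation of $\{0,1,2\}$ has the form $x\mapsto ax+b$; it therefore carries the graph of a grid to the image, under an affine self-map of $\mathbf{Z}_3^5$, of a coset, and that image is again a coset (and still a graph over $\mathbf{Z}_3^3$). So the full group generated by relabelings, layer swaps, and cube motions preserves affineness.

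The core step is to prove that sudo-case~1 is the \emph{only} affine sudo-case. For this I would first establish the small lemma: if a Sudo-Cube with $B1$ in standard position is $\iota$-affine for some $\iota$, then it is affine for the standard $\iota$ --- requiring $B1$ to be standard forces the change of coordinates between the two identifications to be an affine automorphism of $\mathbf{Z}_3^2$, which can be absorbed into the affine map. Consequently an affine grid with standard $B1$ has the form $(k,m,n)\mapsto(k,m)+n\,\vec w$ for a fixed $\vec w\in\mathbf{Z}_3^2$; the distinctness rules along rows and columns force both coordinates of $\vec w$ to be nonzero, leaving the four grids with $\vec w\in\{1,2\}^2$, and these four place the digit $5$ in the four different corners of $B2$. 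So the sudo-case normalization (``$5$ in the top-left of $B2$'') isolates $\vec w=(1,1)$, which is sudo-case~1.

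Combining these: for any transformation $g$, the grid $g(\text{sudo-case }1)$ is affine; the normalization procedure of Section~\ref{section5} is a composition of relabelings and row/column swaps, each itself a transformation, so it preserves affineness; hence normalizing $g(\text{sudo-case }1)$ produces an affine sudo-case, which can only be sudo-case~1. The only real work here is the coordinate bookkeeping and the ``some~$\iota$ implies standard~$\iota$'' step; I do not expect a genuine combinatorial obstacle, which fits the role of sudo-case~1 as the isolated, maximally symmetric solution. A purely computational alternative --- tracking sudo-case~1 through a generating set of symmetries and re-normalizing at each step, or simply inspecting the ten figures and checking that only sudo-case~1 is affine --- would also work, but it is messier and refers to all ten grids, whereas the argument above never mentions the other nine.
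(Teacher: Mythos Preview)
Your proof is correct and takes a genuinely different route from the paper's. The paper isolates sudo-case~1 by a combinatorial invariant: for every choice of ``up'' direction on the cube, the partition of the nine digits induced by each row (split according to which of the three blocks the cell lies in) is the same partition for all three rows; one then observes that this property survives relabeling, layer swaps, and cube motions, and checks against the ten figures that only sudo-case~1 enjoys it. You instead encode the grid as a function $\mathbf{Z}_3^3\to\mathbf{Z}_3^2$ and characterize sudo-case~1 as the unique \emph{affine} sudo-case: invariance under the symmetry group follows because every permutation of $\mathbf{Z}_3$ is itself affine, and uniqueness drops out of the short computation that a standard $B1$ forces the map to be $(k,m,n)\mapsto(k,m)+n\vec w$ with $\vec w\in\{1,2\}^2$, after which the location of $5$ in $B2$ pins down $\vec w=(1,1)$. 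The paper's argument is lighter to set up and stays purely combinatorial, but it asks the reader to inspect all ten figures to confirm the invariant fails for sudo-cases~2--10. Your argument needs more algebraic scaffolding up front --- the ``some~$\iota$ implies standard~$\iota$'' step in particular --- but then never touches sudo-cases~2--10 and explains structurally why sudo-case~1 is special: it is the group-coset solution over $\mathbf{Z}_3$. The two invariants are closely related (affineness immediately yields the paper's row-partition property), so the approaches are compatible rather than orthogonal.
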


\begin{proof}
Suppose we pick a direction on the cube that is up. Then the blocks $B1$, $B2$, and $B3$ are uniquely defined. Consider a row of 9 digits that spans all three blocks. Each row is a partition of digits 1-9 into three sets of three elements. Sudo-case 1 is the only sudo-case where for all initial direction this partition is the same for every row. 

Relabeling, shuffling layers, and movements of the cube do not change this property. Thus sudo-case 1 remains equivalent to sudo-case 1 under all transformations.
\end{proof}

Now we continue with our main result of classifying Cube Sudo-Kurve solution grids up to symmetry.

\begin{theorem}
There are only two distinct sudo-cases under symmetry: sudo-case 1 and sudo-case 2. Sudo-cases 3 through 10 are equivalent to sudo-case 2.
\end{theorem}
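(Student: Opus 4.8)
The plan is to leverage the preceding lemma --- sudo-case~1 is fixed by every admissible symmetry --- so that only sudo-cases 2 through 10 remain, and to show that each of them can be carried to sudo-case~2 by a composition of the admissible moves: relabeling digits, permuting and reversing the three families of parallel layers, and rotating or reflecting the cube. I would first cut down the number of representatives that need a hands-on argument, and then finish those by explicit grid-chasing.

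The first reduction exploits the cyclic symmetries not used in producing the ten sudo-cases. In every sudo-case the three copies of the symbol occupying the centre of $B1$ are pinned: one in the centre of $B1$, one in the top-left of $B2$, and --- by the two-instances lemma --- one in the bottom-right of $B3$. One can pick a symmetry of the Sudo-Cube, namely a simultaneous cyclic shift of the three layers in each of the three axis directions with suitably chosen amounts, that cyclically permutes these three pinned cells; following it with the unique relabeling that restores $B1$ to standard form and the pinned symbol to the top-left of $B2$ yields a permutation of the ten sudo-cases which fixes sudo-case~1 and has order~$3$. Reading off its effect on the first row of $B2$, it cyclically permutes sudo-cases $2,3,4$, it cyclically permutes $5,6,7$, and it cyclically permutes $8,9,10$. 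Hence it is enough to prove that sudo-case~2, sudo-case~5, and sudo-case~8 all lie in a single orbit.

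The crux is to link these three representatives, and for this I would change which axis of the cube is treated as ``up.'' By the isomorphism theorem the coordinates $(k,m,n)$ play interchangeable roles, so a cube rotation that permutes the axes converts a solution's block structure into its row structure and so on, and hence moves among the three Cases of the counting theorem. Applying such a rotation (possibly composed with a layer shift) to sudo-case~5, then relabeling and renormalizing, I expect to reach a grid whose first row of $B2$ is a row of $B1$ --- a Case~1 grid, which by the characterization in the earlier lemma is not sudo-case~1 and so lies among $2,3,4$; a similar rotation should send sudo-case~8 into Case~1 or directly into the orbit just obtained. Because the counting-theorem proof already reconstructs the unique completions in Cases~2 and~3, each such verification is just the rewriting of one explicit $3\times3\times3$ filling in a rotated frame, followed by reading off the new first row of $B2$. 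Combining all of this with the lemma gives exactly two orbits: sudo-case~1 and sudo-case~2.

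I expect the main obstacle to be precisely that bookkeeping: re-coordinatizing a fully filled cube under an axis permutation while simultaneously relabeling to standard form is easy to get wrong, and one must confirm that each composite used is genuinely an admissible symmetry. As an independent consistency check I would carry out the orbit--stabilizer count --- determine the stabilizer orders of sudo-case~1 and of sudo-case~2 in the full symmetry group and verify that the two orbit sizes sum to $14515200$ --- which rules out any additional orbit hiding among sudo-cases 3 through 10.
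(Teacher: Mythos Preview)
Your strategy is reasonable and organized differently from the paper's. The paper does not use an order-$3$ symmetry to collapse $\{2,3,4\}$, $\{5,6,7\}$, $\{8,9,10\}$ in one stroke; instead it exhibits explicit pairwise equivalences: reflecting each block in its main diagonal gives $2\leftrightarrow5$, $3\leftrightarrow6$, $4\leftrightarrow7$, $8\leftrightarrow9$; the block swaps $B1\leftrightarrow B2$ and $B2\leftrightarrow B3$ give $4\to2$ and $3\to2$; swapping the top and middle rows gives $10\to8$; and a rotation about a space diagonal gives $8\to7$. Your route is more structural and, if the claimed $3$-cycles hold, would cut the hands-on work to linking three representatives rather than chasing seven separate pairs.

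The gap is that you do not actually perform either of the two verifications the argument rests on. The assertion that the simultaneous cyclic layer shift, after renormalization, acts on the ten sudo-cases as $(2\,3\,4)(5\,6\,7)(8\,9\,10)$ is plausible---the shift by $(+2,+2,+1)$ does cycle the three pinned cells $(2,2,1)\to(1,1,2)\to(3,3,3)$---but the induced action on the sudo-cases is stated without even a sketch of the check on one triple. And the crux, sending sudo-case~5 and sudo-case~8 into $\{2,3,4\}$ by an axis rotation, is left at the level of ``I expect'' and ``should.'' In a theorem whose entire content is a finite equivalence computation, writing out those grids \emph{is} the proof; the paper's argument consists precisely of doing so for each link. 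Your proposed orbit--stabilizer sanity check is a good idea, but computing the stabilizers of sudo-cases~1 and~2 inside $(S_3\wr S_3)\times S_9$ is itself nontrivial and does not replace exhibiting the equivalences.
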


\begin{proof} 
We show below that using a reflection with respect to the main diagonal in each of the blocks $B1$, $B2$, and $B3$, sudo-cases 5, 6, 7, and 9 are equivalent to sudo-cases 2, 3, 4, and 8 correspondingly. By swapping $B1$ and $B2$, sudo-case 4 becomes sudo-case 2. By swapping $B2$ and $B3$, sudo-case 3 becomes sudo-case 2. By swapping the top and the middle layer in each block, sudo-case 10 becomes sudo-case 8. By rotation sudo-case 8 becomes sudo-case 2. 

Sudo-cases 2-9 are thus equivalent.
\end{proof} 

\subsection{Reflection with respect to the main diagonal in each block}

To start, we consider a symmetry that keeps the 5 in place: the reflection of each block with respect to the main diagonal. We then need to relabel our digits so that the first block is in the standard form. That is, we swap the digits in the following pairs: (2,4), (3,7), and (6,8).

\textbf{Sudo-case 2}, which is
\begin{figure}[H]
    \centering\includegraphics[width=7cm,trim={0 1cm 0 0},clip]{SK5_1_2.png}
\end{figure}
\noindent becomes, upon reflection,
\begin{figure}[H]
    \centering\includegraphics[width=7cm,trim={0 1cm 0 0},clip]{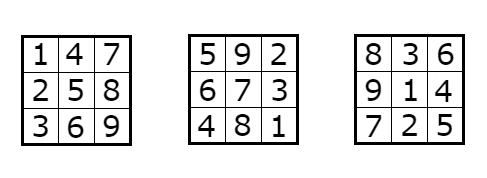}
\end{figure}
\noindent which itself becomes, upon relabeling,
\begin{figure}[H]
    \centering\includegraphics[width=7cm,trim={0 1cm 0 0},clip]{SK5_1_5.png}
\end{figure}
\noindent which is sudo-case 5.

\textbf{Sudo-case 3}, or,
\begin{figure}[H]
    \centering\includegraphics[width=7cm,trim={0 1cm 0 0},clip]{SK5_1_3.png}
\end{figure}
\noindent becomes, upon reflection,
\begin{figure}[H]
    \centering\includegraphics[width=7cm,trim={0 1cm 0 0},clip]{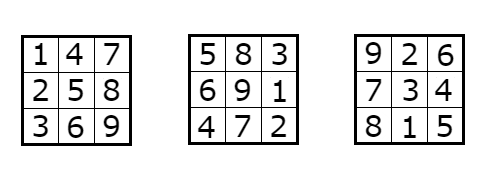}
\end{figure}
\noindent which, upon relabeling, is
\begin{figure}[H]
    \centering\includegraphics[width=7cm,trim={0 1cm 0 0},clip]{SK5_1_6.png}
\end{figure}
\noindent or sudo-case 6.

\textbf{Sudo-case 4} is
\begin{figure}[H]
    \centering\includegraphics[width=7cm,trim={0 1cm 0 0},clip]{SK5_1_4.png}
\end{figure}
\noindent which becomes, upon reflection,
\begin{figure}[H]
    \centering\includegraphics[width=7cm,trim={0 1cm 0 0},clip]{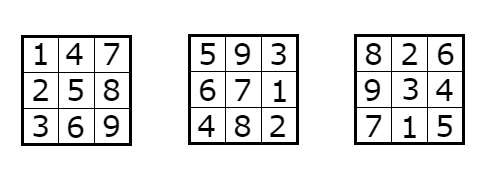}
\end{figure}
\noindent and upon relabeling becomes
\begin{figure}[H]
    \centering\includegraphics[width=7cm,trim={0 1cm 0 0},clip]{SK5_1_7.png}
\end{figure}
\noindent which is sudo-case 7.

Finally, \textbf{Sudo-case 8} is
\begin{figure}[H]
    \centering\includegraphics[width=7cm,trim={0 1cm 0 0},clip]{SK5_1_8.png}
\end{figure}
\noindent which becomes, upon reflection,
\begin{figure}[H]
    \centering\includegraphics[width=7cm,trim={0 1cm 0 0},clip]{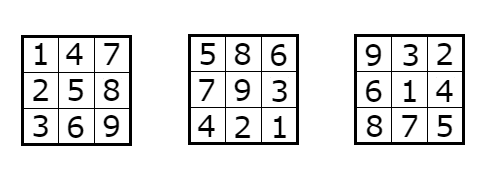}
\end{figure}
\noindent which, upon relabeling, becomes
\begin{figure}[H]
    \centering\includegraphics[width=7cm,trim={0 1cm 0 0},clip]{SK5_1_9.png}
\end{figure}
\noindent which is sudo-case 9.

\subsection{Swap B1 and B2}

We now try another operation: switching $B1$ and $B2$. With this we show that sudo-case 2 is equivalent to sudo-case 4.

\textbf{Sudo-case 2} is
\begin{figure}[H]
    \centering\includegraphics[width=7cm,trim={0 1cm 0 0},clip]{SK5_1_2.png}.
\end{figure}
Now we swap $B1$ and $B2$ and get
\begin{figure}[H]
    \centering\includegraphics[width=7cm,trim={0 1cm 0 0},clip]{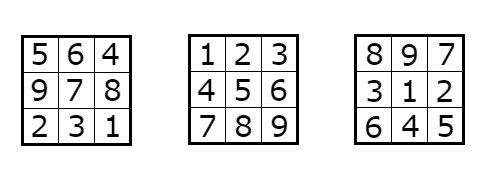}.
\end{figure}
Now, if we were to relabel this configuration, the 7's would become 5's. Unfortunately, this would mean the 5 would not be in the upper-left corner of $B2$. To fix this, we swap the first and last row to get
\begin{figure}[H]
    \centering\includegraphics[width=7cm,trim={0 1cm 0 0},clip]{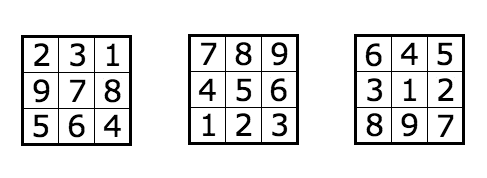}.
\end{figure}
After relabeling we get
\begin{figure}[H]
    \centering\includegraphics[width=7cm,trim={0 1cm 0 0},clip]{SK5_1_4.png}
\end{figure}
\noindent which is sudo-case 4.

\subsection{Swap B2 and B3}

We now swap $B2$ and $B3$. This allows us to show that sudo-case 2 and sudo-case 3 are the same.

\textbf{Sudo-case 2} is
\begin{figure}[H]
    \centering\includegraphics[width=7cm,trim={0 1cm 0 0},clip]{SK5_1_2.png}
\end{figure}
\noindent and after swapping $B2$ and $B3$ we get
\begin{figure}[H]
    \centering\includegraphics[width=7cm,trim={0 1cm 0 0},clip]{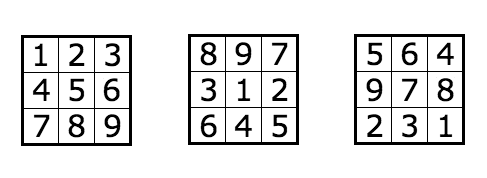}.
\end{figure}
In order to get the 5 from the bottom-right of $B2$ to the top-left, we need to swap the bottom and top rows, then swap the left and right columns:
\begin{figure}[H]
    \centering\includegraphics[width=7cm,trim={0 1cm 0 0},clip]{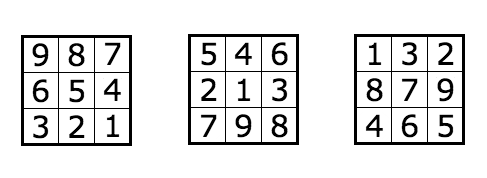}.
\end{figure}
And relabeling gives
\begin{figure}[H]
    \centering\includegraphics[width=7cm,trim={0 1cm 0 0},clip]{SK5_1_3.png}
\end{figure}
\noindent which is sudo-case 3.

\subsection{Swap top and middle row in each block}

We now swap the top and middle rows of each block. This will show that sudo-cases 8 and 10 are equivalent.

\textbf{Sudo-case 8} is
\begin{figure}[H]
    \centering\includegraphics[width=7cm,trim={0 1cm 0 0},clip]{SK5_1_8.png}
\end{figure}
\noindent and swapping the top and middle rows gives us
\begin{figure}[H]
    \centering\includegraphics[width=7cm,trim={0 1cm 0 0},clip]{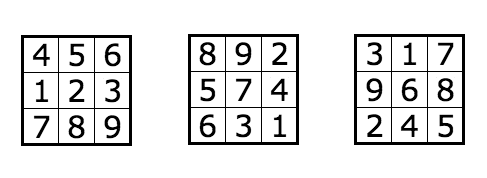}.
\end{figure}
We now need the 2 in $B2$ to be in the upper-left, so we swap the left- and rightmost columns of each block.
\begin{figure}[H]
    \centering\includegraphics[width=7cm,trim={0 1cm 0 0},clip]{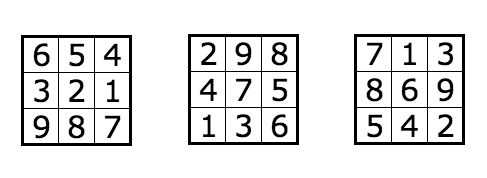}.
\end{figure}
Finally, relabeling gives us
\begin{figure}[H]
    \centering\includegraphics[width=7cm,trim={0 1cm 0 0},clip]{SK5_1_10.png}
\end{figure}
\noindent which is sudo-case 10.

\subsection{Rotation}

Now we start with sudo-case 8
\begin{figure}[H]
    \centering\includegraphics[width=7cm,trim={0 1cm 0 0},clip]{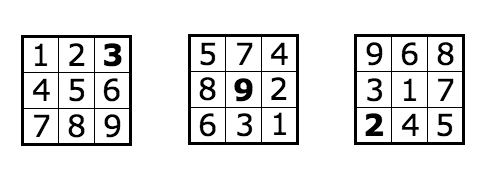}.
\end{figure}
We rotate it along the 3-9-2 space diagonal (in bold above and below):
\begin{figure}[H]
    \centering\includegraphics[width=7cm,trim={0 1cm 0 0},clip]{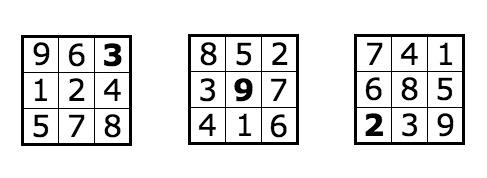}.
\end{figure}
We then need a 2 in the upper-left of $B2$, so we swap the left and right columns:
\begin{figure}[H]
    \centering\includegraphics[width=7cm,trim={0 1cm 0 0},clip]{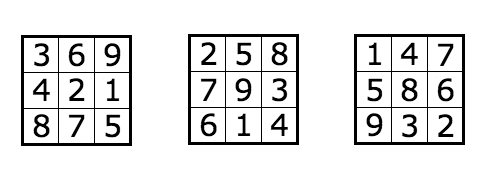}.
\end{figure}
And finally, relabeling gives
\begin{figure}[H]
    \centering\includegraphics[width=7cm,trim={0 1cm 0 0},clip]{SK5_1_7.png}
\end{figure}
\noindent which is sudo-case 7. Sudo-case 8 is therefore equivalent to sudo-case 2.

\section{Other Cube Sizes}
\label{section7}

It is interesting to determine similar properties for cubes of smaller sizes. 

For a $1\times 1 \times 1$ cube, there is clearly only $1$ way to fill the grid whether or not we consider symmetries.

For a $2 \times 2 \times 2$ cube, there are $24$ distinct ways to arrange the numbers in the first layer. After that, the only way to arrange the top layer is to put everything in the diametrically opposite place. The total number of solutions grids without considering symmetry for a cube of size 2 is 24.

Since all solution grids of this cube are derived from relabeling the numbers in the first layer, all solution grids of the $2 \times 2 \times 2$ cube are isomorphic to each other under relabeling.

Thus, the sequence of the number of different solution grids as a function of the grid size starts as 1, 24, 14515200. The sequence of the number of distinct solution grids under symmetry starts as 1, 1, 2.

\section{The minimum number of clues}
\label{section8}

It is known that in ``regular'' Sudoku the minimum number of clues needed to uniquely determine a solution grid is 17 \cite{minnumber}.

We can now determine the minimum number of clues required to force a unique solution in a Cube Sudo-Kurve. For a Cube Sudo-Kurve of size 1, we do not need any symbols. For a Cube Sudo-Kurve of size 2, we must need at least three symbols so we can differentiate between all symbols. Here is an example of such a minimal Cube Sudo-Kurve:

\begin{figure}[H]
    \centering\includegraphics[width=3.5cm,trim={0 0 0 0},clip]{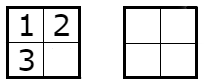}.
\end{figure}

For a Cube Sudo-Kurve of size 3, we must have at least eight different symbols. Otherwise, we would not be able to tell the difference between the two or more missing symbols. To prove that $8$ is the required minimum, we created the following two puzzle grids in Figure~\ref{fig:minpuzzle1} and Figure~\ref{fig:minpuzzle2}, which have only one solution. We encourage readers to solve them both.

\showpic{H}{7}{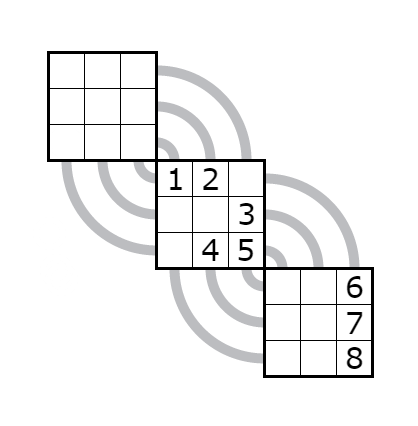}{Puzzle with Minimum Number of Clues (easier)}{fig:minpuzzle1}
\showpic{H}{7}{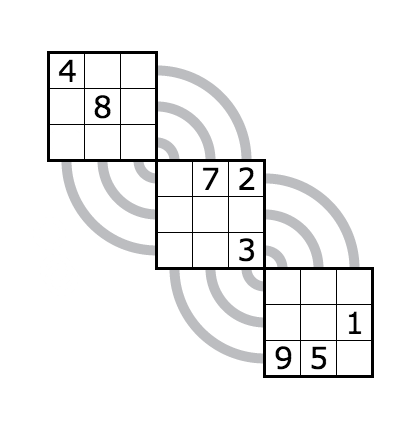}{Puzzle with Minimum Number of Clues (harder)}{fig:minpuzzle2}

\section{Placing single digits}
\label{section9}

We now count the number of ways to place a particular digit in a cube of size $n$.

\begin{lemma}
For $n \times n \times n$ cube, the number of ways to place any given digit is $(n!)^2$.
\end{lemma}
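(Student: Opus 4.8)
The plan is to think of a placement of a fixed digit, say the digit $1$, as a choice of one cell $(k,m,n)$ in each block $n \in \{1,\dots,n\}$ such that no two chosen cells share a row index $k$ or a column index $m$. (By the Sudo-Cube description, the ``layers'' perpendicular to the three axes are exactly: blocks, the rows-across-all-blocks, and the columns-across-all-blocks; so the digit $1$ must appear exactly once in each block, once in each of the $n$ row-slabs, and once in each of the $n$ column-slabs.) Concretely, for each block index $n$ we must pick a cell with coordinates $(k_n, m_n)$, and the constraints are that $n \mapsto k_n$ is a bijection of $\{1,\dots,n\}$ (each row-slab used once) and $n \mapsto m_n$ is a bijection of $\{1,\dots,n\}$ (each column-slab used once).

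From this reformulation the count is immediate: first I would observe that choosing the row coordinates amounts to choosing a permutation $\sigma$ with $k_n = \sigma(n)$, giving $n!$ choices, and independently choosing the column coordinates amounts to a permutation $\tau$ with $m_n = \tau(n)$, giving another $n!$ choices. Since the row-bijection constraint and the column-bijection constraint involve disjoint sets of coordinates, these two choices are independent, so the total is $n! \cdot n! = (n!)^2$. I would also verify the converse direction — that every such pair $(\sigma,\tau)$ yields a legal placement — which is clear because distinct blocks automatically differ in the block coordinate, and $\sigma,\tau$ being bijections guarantees distinct row and column coordinates respectively, so all three slab conditions hold.

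The only genuine content is setting up the reformulation correctly, i.e., arguing that the digit-placement constraints are precisely ``one per block, one per row-slab, one per column-slab'' and that there are no further interactions — this follows directly from the Sudo-Cube rule that the nine (in general $n^2$) cells in each axis-perpendicular layer are distinct, together with Lemma stating that two instances of a symbol determine the third (which for size $3$ is the special case $n=3$, but the underlying ``three coordinates, each taking each value once'' structure is what generalizes). The main obstacle, such as it is, will be stating this cleanly rather than proving anything hard: one should be careful that the claim ``$\sigma$ and $\tau$ can be chosen independently'' is justified, since a priori one might worry that forcing all $k_n$ distinct constrains the available $m_n$. It does not, because the cell $(k,m,n)$ exists for every combination of a row index $k$, column index $m$, and block index $n$; there is no exclusion among these coordinates within a block. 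Hence the count factors as claimed and equals $(n!)^2$.
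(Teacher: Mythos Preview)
Your proof is correct, but it takes a different route from the paper's. The paper argues by induction on $n$: there are $n^2$ choices for the symbol's position in the bottom block, after which one row and one column are forbidden in each remaining layer, leaving a situation isomorphic to an $(n-1)\times(n-1)\times(n-1)$ cube; hence the count is $n^2\cdot((n-1)!)^2=(n!)^2$. You instead give a direct bijection: a legal placement is exactly a pair of permutations $(\sigma,\tau)$ of $\{1,\dots,n\}$ recording the row and column coordinates in each block, and these are chosen independently. Your argument is arguably cleaner and makes the independence of the two factors of $n!$ explicit, while the paper's induction has the advantage of being self-contained without needing to spell out the reformulation. One small note: you overload the symbol $n$ as both the cube size and the block index, which is harmless but worth cleaning up.
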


\begin{proof}
There are $n^2$ locations to place the symbol in the bottom block. In the next layers from the bottom, one column and one row is forbidden. Therefore, we have leftover spaces isomorphic to $(n-1)$ cube, so we can inductively compute that there are $((n-1)!)^2$ ways to place the symbol in the rest of the cube, and the final total is $n^2 \cdot ((n-1)!)^2 = (n!)^2$.
\end{proof}

In particular, for $n = 2$ we get $4$, and for $n=3$, we get $36$.

We can also look at the placement of the same digit up to isomorphisms of the cube. That is we are looking at the shape that is formed inside a cube by the same symbol. Looking at shapes allows us to ignore the actual digits, that is we are studying the shape interaction up to relabeling. The idea to use shapes of symbols was used by Conway and Ryba to describe different Latin squares of size 4 up to movements of the plane and relabeling \cite{CR}.

For $n=2$, the only way we can place a given digit is to place it along a main diagonal of the cube. 

For $n=3$, if our digit occupies the center, then it has to use up one of the four main diagonals. Up to rotations, all of these shapes are the same.

Suppose a digit takes up a corner and does not use the center. Without loss of generality, we can say that the corner has coordinates $(1,1,1)$. Then the other two points must have coordinates $(2,3,3)$, $(3,2,2)$, up to movements of the cube. In other words, one symbol is at a vertex of a cube, another symbol is in the center of a face not adjacent to the vertex, and the third symbol is in the middle of an edge that is neither adjacent to the vertex nor to the face from which the center is used.

Thus, for each corner, there are three possibilities. There are total of 24 arrangements of a single digit in this case. Up to rotations and reflections all these shapes are the same, and they all form a scalene triangle.

If there is no symbol in a corner, then all three of them must be in the middles of edges no two of which share a vertex. For example the digits could be at $(1,2,1)$, $(2,3,3)$, and $(3,1,2)$.

There are eight cases like this. All such shapes are isomorphic to an equilateral triangle. 

A given symbol in a 3 by 3 by 3 cube could only be in one of the three shapes described above. Exactly one digit, the one in the center, corresponds to the diagonal, six digits have to use a triangle passing through a corner, and two digits form equilateral triangles.

Let us consider sudo-case 1. We can recognize it by the shapes formed by each of the symbols. For instance, the digit 7 forms a diagonal. We consider planes that go through this diagonal and two opposite edges. There are 3 such planes and they are listed below.

\begin{itemize}
    \item The plane that is formed by the main diagonals of each block contains only digits 1, 5, and 9.
    \item The plane formed by the last row in $B1$, the middle row in $B2$ and the top row in $B3$ contains only digits 7, 8, and 9.
    \item The plane formed by the last column in $B1$, the middle column in $B2$, and the first column in $B3$ contains only digits 3, 6, and 9.
\end{itemize}

By checking all sudo-cases, we see that the only case where the three planes that include two opposite edges and the main diagonal formed by the same digit all have exactly three different symbols is sudo-case 1. This property is invariant under relabeling and movements of the cube. Given that the we get to the standard form of sudo-case 1 by relabeling and reflections of the cube, that means this property stays before relabeling and reflections. That means we can recognize the sudo-case 1 by this property before any action.

\section{Acknowledgments}

We would like to thank MIT's PRIMES-USA for making this research paper possible.

\end{document}